\setlist{
  leftmargin=*, 
  topsep = 4pt, 
  itemsep = 1pt, 
}
\numberwithin{equation}{section}
\newtheorem{theorem}{Theorem}[section]
\newtheorem{lemma}[theorem]{Lemma}
\theoremstyle{definition}
\newtheorem{definition}[theorem]{Definition}
\newtheorem{assumption}{Assumption}
\theoremstyle{remark}
\newtheorem{example}[theorem]{Example}
\renewcommand{\qed}{\hfill\rule{1ex}{1.5ex}}
\newcommand{\reDeclareMathOperator}[2]{\let#1\undefined \DeclareMathOperator{#1}{#2}}
\newcommand{\reDeclareMathOperatorL}[2]{\let#1\undefined \DeclareMathOperator*{#1}{#2}}
\reDeclareMathOperator{\mod}{mod}
\reDeclareMathOperator{\supp}{supp}
\reDeclareMathOperator{\Proj}{Proj}
\renewcommand{\bar}{\overline}
\let\originalbigcap\bigcap
\let\originalbigcup\bigcup
\reDeclareMathOperatorL{\bigcap}{\mathbin{\scaleobj{0.9}{\originalbigcap}}}
\reDeclareMathOperatorL{\bigcup}{\mathbin{\scaleobj{0.9}{\originalbigcup}}}
\renewcommand*{\@fnsymbol}[1]{\ifcase#1\or*\else\@arabic{#1}\fi}
\title{A self-adaptive subgradient extragradient method with conjugate gradient-type direction for pseudomonotone variational inequalities}
\author[1,3]{Ibrahim Arzuka}
\author[1,2]{Parin Chaipunya\thanks{Corresponding author}$^{,}$}
\author[1,2]{Poom Kumam}
\affil[1]{
  {Department of Mathematics, Faculty of Science, King Mongkut's University of Technology Thonburi},
  {126 Pracha Uthit Rd.},
  {Bang Mod, Thung Khru, Bangkok},
  {10140}, 
  {Thailand.}
}
\affil[2]{
  {Center of Excellence in Theoretical and Computational Science (TaCS-CoE), King Mongkut's University of Technology Thonburi},
  {126 Pracha Uthit Rd.},
  {Bang Mod, Thung Khru, Bangkok},
  {10140}, 
  {Thailand.}
}
\affil[3]{
{Department of Mathematics, Faculty of  Science, Sa'adu Zungur University, Bauchi, Nigeria.}}
\affil[ ]{
  Emails: arzuka2000@gmail.com, parin.cha@kmutt.ac.th and  poom.kum@kmutt.ac.th;
  \vspace{.25cm}
}
\date{}
\begin{document}
\maketitle \vspace{-1.8cm}
\thispagestyle{draftfirstpage}

\newcommand{\pre}{\preccurlyeq_{\varepsilon,\varphi}}

\begin{abstract}
  This paper introduces a subgradient extragradient algorithm with a conjugate gradient‐type direction to solve pseudomonotone variational inequality problems in Hilbert spaces. The algorithm features a self-adaptive strategy that eliminates the need for prior knowledge of the Lipschitz constant and incorporate a conjugate gradient‐type direction to enhance convergence speed. We establish a result describing the behavior of sequences generated therefrom toward the solution set. Using this result, we prove the strong convergence of the proposed method and provide numerical experiments to demonstrate its computational efficacy and robustness. Finally, we present a potential application of the method.



  \noindent{\bf Keywords:} Conjugate gradient; Pseudomonotone operators; Variational inequality.

\end{abstract}


\normalsize

\section{Introduction}\label{sec1}
The variational inequality problem (brieftly, VIP) is concerned with finding a point
$x^{*}\in C$ such that   
\begin{equation}\label{sg1}
  \left\langle B(x^{*}), x-x^{*}\right\rangle\geq 0, \quad \forall x\in C, 
\end{equation}
where $B$ is an operator defined on a real Hilbert space $H$ into itself, and $C$ is a closed, convex, nonempty subset of $H$. The VIP  is regarded as a unifying tool that generalizes the study of several problems into a common form, including those in engineering, transportation, economics, mathematical programming, and mechanics, as stated in
\cite{solodov1999,yom2021inertial,iiduka2012fixeda,iiduka2012afixed,2012medical,adamu2022accel,saberi2016network,semenov2005vector,konnov1997systems,kravchuk2007variational}. Due to the wide range applications of the VIP, numerous methods have been proposed in the literature to determine its solution. The extragradient method (EGM) \cite{korpelevich19} is a well-known technique for this purpose:  
\begin{equation}\label{pp0}
  \begin{cases}
    &x_0 \in C,\\
    &y_j=P_{C}(x_j-\lambda B{(x_j)}),\\
    &x_{j+1}=P_{C}(x_j-\lambda B{(y_j)}),
  \end{cases}
\end{equation}
where \( \lambda \in \left(0, \frac{1}{L}\right) \) and $B$ is a monotone oprator. The sequence \( \{x_j\} \) generated by EGM converges weakly to the solution of the VIP. But the operator \( B \) and the metric projection \( P_{C} \) are computed twice at each iteration, and the step length \( \{\lambda_{j}\} \) heavily depends on the Lipschitz constant \( L \) of the underlying operator. This leads to the method being computationally expensive, particularly when \( P_{C} \) is not explicitly known or when the Lipschitz constant is unknown in advance. 

To address this limitation, Popov \cite{popov1980mod} proposed the following modification:
\begin{equation}
  \begin{cases}
    &x_0 \in C,\\
    &y_j=P_{C}(x_j-\lambda_j B{(x_j)}),\\
    &x_{j+1}=P_{C}(y_j-\lambda_j B{(x_j)}),
  \end{cases}
\end{equation}
where \( \lambda_j \in \left(0, \frac{1}{3L}\right) \). The sequence \( \{x_{j}\} \) of approximations generated by this method converges weakly to the solution of the VIP. This strategy reduces the computation of the operator \( B \) to only once per iteration, but it still requires computing the metric projection \( P_C \) twice at each iteration. Censor et.al \cite{censor2011strong,censor2012} proposed the subgradient extragradient method (SEGM) by replacing the second projection in \eqref{pp0} with a projection onto the half space as:
\begin{equation}
  \begin{cases}
&x_0 \in C,\\
&y_j=P_{C}(x_j-\lambda B{(x_j)}),\\
&V_{x}=\{\mu\in H:  \left\langle x_j-\lambda_jB(t)-y_j, \mu-y_{j}\right\rangle\\
&x_{j+1}=P_{V_{x}}(y_j-\lambda B{(x_j)}),
  \end{cases}
\end{equation}
where $\lambda_j\in (0,\frac{1}{L}).$ This strategy significantly reduces the extragradient method's computational burden, as the half-space's closed-form expression is always known. It converges weakly when \( B \) exhibits monotonicity, and Lipschitz continuity assumptions. 

Over the years, several improved versions of the SEGM have been proposed to solve variational inequalities, equilibrium problems, and other optimization problems. In this context, inertial approaches have been introduced to accelerate the convergence rate of related SEGM algorithms.
On the other hand, some algorithms are designed such that their step size is determined without prior knowledge of the Lipschitz constants, and no any line search procedure is needed. This approach, known as self-adaptive, provides additional computational advantages compared to other methods in the literature.  
For instance, 
Yang et al. \cite{yang2018modified} proposed a self-adaptive version of SEGM  as: 
\[
  \begin{aligned}
    \begin{cases}
      y_j = P_C(x_j - \alpha_j B (x_j)), \\
      T_j = \left\{ w \in H \mid \langle x_j - \alpha_j B( x_j) - y_j, w - y_j \rangle \leq 0 \right\}, \\
      x_{j+1} = P_{T_j}(x_j - \alpha_j B (y_j), \\
      \alpha_{j+1} = \begin{cases}
        \min \left( \frac{\mu \left( \|x_j - y_j\|^2 + \|x_{j+1} - y_j\|^2 \right)}{2 \langle B( x_j) - B (y_j), x_{j+1} - y_j \rangle}, \lambda_j \right), & \text{if } \langle B(x_j) - B (y_j), x_{j+1} - y_j \rangle > 0, \\
        \lambda_j, & \text{otherwise,}
      \end{cases}
    \end{cases}
  \end{aligned}
\]
and showed that the sequence $\{x_j\}$ generated   converges weakly to a solution of $VIP(B, C).$  For more into subgradient extragrediedent methods, refer to the manuscripts and the reference therein \cite{abubakar2022self,wang2022self,ur2022inertial,alakoya2021modified,tian2019self}. 
Similarly, the conjugate gradient-type methods were proposed to solve variational inequality problems. These methods converge strongly to the solution of the VIP under stringent assumptions that the operator is $\alpha$ strongly monotone and the feasible set is bounded see for example the manuscripts \cite{iiduka2015acce,iiduka2009use,iiduka2011three}.
Motivated by the aforementioned studies, we relax the stringent assumptions to propose a subgradient extragradient method with a conjugate gradient-type direction for solving a pseudomonotone variational inequality problem in Hilbert space. Moreover, the scheme is equipped with a self-adaptive strategy that eliminates the need for prior knowledge of the Lipschitz constant. To our knowledge, this is the first approach to incorporate a conjugate gradient-type direction into the subgradient extragradient method.

The structure of this paper is as follows. Section \ref{zec} presents the essential definitions and lemmas required for the subsequent discussions. In Section \ref{zec2}, we introduce the algorithm and analyze its convergence. Section \ref{zec3} provides numerical examples with application to the noncooperative matrix game problem, demonstrating the efficacy of the proposed algorithm compared to state-of-the-art methods.
The paper concludes with a concise summary in Section \ref{zec4}.

\section{Preliminaries}\label{zec}
In this section, we focus on the definitions and lemmas necessary for this study. Unless otherwise stated, we consistently represent the solution set of the variational inequality problem with the operator \( B \) over the set \( C \) as \( VIP(B, C) \) throughout the manuscript.

\begin{definition}
  Let $B : H \rightarrow H$  be a mapping then
  \begin{enumerate}[label=\upshape(\roman*)]
    \item $B$ is  $\alpha$-strong  monotone operator if there exist $\alpha>0$ such that $$\langle x-y \; , \; B(x)-B(y) \rangle \geq  \alpha \|x- y \|^{2},\quad \forall x,y \in H.$$ 
    \item $B$ is  monotone if  for any $x,y \in H$  $$\langle x-y \; , \; B(x)-B(y) \rangle \geq  0.$$  
    \item $B$ is pseudomonotone if  for any $x,y \in H$  $$\langle x-y \; , \; B(x) \rangle \geq  0\;\; \Rightarrow\; \;\langle x-y \; , \; B(y) \rangle \geq  0.$$ 
    \item $B$ is  lipschitz continuous if  there exist $L>0$ such that  $$\|B(x)- B(y) \|\leq L \|x- y \|, \quad \forall x,y\in C.$$ 
  \end{enumerate}
\end{definition}  

\begin{definition}  \label{def01}
  Let us consider a nonempty, closed, convex subset \( C \) of the Hilbert space \( H \). The mapping \( P_{C}: C \rightarrow H \), which assigns each element of \( H \) to its unique nearest element in \( C \), is referred to as a metric projection. Some properties of this mapping include:
  \begin{enumerate}[label=\upshape(\roman*),leftmargin=*]
    \item  $\langle x-P_{C}(x) \; , \; y-P_{C}(x) \rangle \leq 0,~~\forall~ y \in C ~ and ~ x\in H,$
    \item  $\|x- P_{C}(y) \|= \inf\|x- y \|, ~~~ \forall y \in H, $
    \item $\|P_{C}(x)- P_{C}(y) \|\leq\|x- y \|, ~~~ \forall x,y \in H.$
    \item $\|P_{C}(x)-y\|^{2}\leq \|x-y\|^{2}-\|P_{C}(x)-x\|^{2}, ~~~ \forall y \in H.$
  \end{enumerate}
\end{definition}  
The following characterizations would be used to establish the proposed method's convergence.
\begin{lemma}\label{l1}
  Let $x,y\in H$ and $\alpha \in \mathbb{R}$.   Then
  \begin{enumerate}[label=\upshape(\roman*)]
    \item  $\|x+y\|^2=\|x\|^2+\|y\|^2+2\langle x \; , \;y \rangle$.
    \item $\|x+y\|^2\leq\|x\|^2+2\langle x+y \; , \;y \rangle$.
    \item $\|(1-\alpha)x-\alpha y\|^2 =(1-\alpha)\|x\|^2+\alpha\|y\|^2-\alpha(1-\alpha)\|x-y\|^2$.
  \end{enumerate}
\end{lemma}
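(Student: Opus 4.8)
All three statements are standard identities in an inner-product space, and the plan is to obtain them from the single relation $\|z\|^2=\langle z,z\rangle$ together with the bilinearity and symmetry of $\langle\cdot,\cdot\rangle$; neither completeness nor any other feature of $H$ is needed. I would prove (i) by a direct expansion and then read off (ii) and (iii) from it.

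For (i), expanding $\|x+y\|^2=\langle x+y,\,x+y\rangle=\langle x,x\rangle+\langle x,y\rangle+\langle y,x\rangle+\langle y,y\rangle$ and using $\langle x,y\rangle=\langle y,x\rangle$ collapses the two cross terms into $2\langle x,y\rangle$, which is the claim. For (ii), I would start from the right-hand side and note that $2\langle x+y,y\rangle=2\langle x,y\rangle+2\|y\|^2$, so by (i) one has $\|x\|^2+2\langle x+y,y\rangle=\|x+y\|^2+\|y\|^2$; discarding the nonnegative term $\|y\|^2$ yields the inequality (with equality precisely when $y=0$).

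For (iii), I would apply (i) to the two vectors $(1-\alpha)x$ and $\pm\alpha y$ that constitute the left-hand side, and separately expand $\|x-y\|^2$ via (i); after substituting the latter into the right-hand side, it remains to match the coefficients of $\|x\|^2$, $\|y\|^2$, and $\langle x,y\rangle$ on the two sides, which reduces to the scalar identities $(1-\alpha)-\alpha(1-\alpha)=(1-\alpha)^2$ and $\alpha-\alpha(1-\alpha)=\alpha^2$. There is no genuine obstacle here: the lemma is a routine toolbox of norm identities to be invoked repeatedly in the later convergence analysis, and the only bookkeeping worth double-checking is the sign and factor on the cross term $\langle x,y\rangle$ in (iii), together with discarding $\|y\|^2$ in the correct direction in (ii).
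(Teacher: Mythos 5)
The paper states this lemma without any proof, so there is nothing on the paper's side to compare against; your derivations of (i) and (ii) are correct and are the standard ones. However, your treatment of (iii) has a genuine gap, and it sits exactly at the point you deferred as ``bookkeeping worth double-checking'': the cross term. Expanding the left-hand side as printed gives
\[
\|(1-\alpha)x-\alpha y\|^2=(1-\alpha)^2\|x\|^2+\alpha^2\|y\|^2-2\alpha(1-\alpha)\langle x,y\rangle,
\]
whereas substituting $\|x-y\|^2=\|x\|^2-2\langle x,y\rangle+\|y\|^2$ into the right-hand side gives
\[
(1-\alpha)^2\|x\|^2+\alpha^2\|y\|^2+2\alpha(1-\alpha)\langle x,y\rangle.
\]
The coefficients of $\|x\|^2$ and $\|y\|^2$ match, as you verified, but the coefficients of $\langle x,y\rangle$ have opposite signs, so the identity in (iii) is false as stated: take $H=\mathbb{R}$, $x=y=1$, $\alpha=\tfrac12$, for which the left side equals $0$ and the right side equals $1$. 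The correct statement---and the one actually needed in convexity arguments of this kind---is $\|(1-\alpha)x+\alpha y\|^2=(1-\alpha)\|x\|^2+\alpha\|y\|^2-\alpha(1-\alpha)\|x-y\|^2$, with a plus sign inside the norm on the left. Your method (expand via (i) and match coefficients) is the right one and goes through verbatim for the corrected identity; the defect is in the statement rather than in your strategy, but a complete proof must actually carry out the cross-term comparison and either flag the discrepancy or prove the corrected version, rather than asserting that ``there is no genuine obstacle.''
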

\begin{lemma}[{\cite[Lemma 8]{he2013solving}}]\label{con}
  Suppose that $\{S_j\}$ is a sequence of nonnegative real numbers such that
  \[ S_{j+1} \leq (1 - \alpha_j) S_j + \alpha_j V_j, \quad \forall n \geq 0, \]
  and 
  \[ S_{j+1} \leq S_j - \xi_j + \Theta_j, \quad \forall n \geq 0, \]
  where $\{\alpha_j\}$ is a sequence in $(0, 1)$, $\{\xi_j\}$ is a sequence of nonnegative real numbers, $\{V_j\}$ and $\{\Theta_j\}$ are real sequences such that:
  \begin{enumerate}[label=\upshape(\roman*),leftmargin=*]
    \item $\sum_{j=0}^{\infty} \alpha_j = \infty$,
    \item $\underset{j \to \infty}{\lim}\Theta_j = 0$,
    \item $\underset{k \to \infty}{\lim }\xi_{j_k} = 0$ implies $\underset{k \to \infty}{\limsup} V_{j_k} \leq 0$ for any subsequence $\{\xi_{j_k}\}$ of $\{\xi_j\}$.
  \end{enumerate}
  Then $\underset{k \to \infty}{\lim }S_{j} = 0$.
\end{lemma}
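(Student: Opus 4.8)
The plan is to argue by the familiar dichotomy on the monotonicity of $\{S_j\}$, carrying out a Maing\'e-type index construction in the non-monotone case. Since every $S_j\geq 0$, it suffices to prove $\limsup_{j\to\infty}S_j\leq 0$.

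First I would dispose of the case in which $\{S_j\}$ is eventually nonincreasing, say $S_{j+1}\leq S_j$ for all $j\geq N$. Then $\{S_j\}_{j\geq N}$ converges to some $S^{\ast}\geq 0$, so $S_j-S_{j+1}\to 0$. The second inequality gives $0\leq\xi_j\leq (S_j-S_{j+1})+\Theta_j$, and since $\Theta_j\to 0$ by (ii) we obtain $\xi_j\to 0$; applying (iii) to the full sequence yields $\limsup_j V_j\leq 0$. Rewriting the first inequality as $\alpha_j(S_j-V_j)\leq S_j-S_{j+1}$ and telescoping from $N$ to $M$ gives $\sum_{j=N}^{M}\alpha_j(S_j-V_j)\leq S_N-S_{M+1}\leq S_N$. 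If $S^{\ast}>0$, then from $\liminf_j(S_j-V_j)\geq S^{\ast}-\limsup_j V_j\geq S^{\ast}$ we would have $\alpha_j(S_j-V_j)\geq\tfrac{S^{\ast}}{2}\,\alpha_j$ for all large $j$, and the previous estimate would force $\tfrac{S^{\ast}}{2}\sum_{j}\alpha_j<\infty$, contradicting (i). Hence $S^{\ast}=0$, which settles this case.

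Next I would treat the case in which $\{S_j\}$ is not eventually nonincreasing. Here I would perform the Maing\'e-type construction of cut-off indices: for $j$ beyond some $j_0$, set $\tau(j):=\max\{k:\ j_0\leq k\leq j,\ S_k\leq S_{k+1}\}$; one checks that this is well defined, that $j\mapsto\tau(j)$ is nondecreasing with $\tau(j)\to\infty$, and that $S_{\tau(j)}\leq S_{\tau(j)+1}$ and $S_j\leq S_{\tau(j)+1}$ for all $j\geq j_0$. Evaluating the second inequality at $\tau(j)$ and using $S_{\tau(j)}\leq S_{\tau(j)+1}$ gives $0\leq\xi_{\tau(j)}\leq\Theta_{\tau(j)}\to 0$, hence $\xi_{\tau(j)}\to 0$; passing to the genuine subsequence indexed by the distinct values attained by $\tau$ (legitimate since $\tau$ is nondecreasing with $\tau(j)\to\infty$) and invoking (iii) yields $\limsup_j V_{\tau(j)}\leq 0$. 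Evaluating the first inequality at $\tau(j)$ and again using $S_{\tau(j)}\leq S_{\tau(j)+1}$ gives
\[
  S_{\tau(j)+1}\leq (1-\alpha_{\tau(j)})\,S_{\tau(j)+1}+\alpha_{\tau(j)}\,V_{\tau(j)},
\]
and since $\alpha_{\tau(j)}>0$ we may cancel to obtain $S_{\tau(j)+1}\leq V_{\tau(j)}$. Therefore $0\leq\limsup_j S_{\tau(j)+1}\leq\limsup_j V_{\tau(j)}\leq 0$, so $S_{\tau(j)+1}\to 0$, and then $S_j\leq S_{\tau(j)+1}$ forces $S_j\to 0$.

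I expect the principal obstacle to be the non-monotone case: setting up the cut-off indices $\tau(j)$ and verifying their properties (well-definedness, monotonicity, divergence, and the comparison $S_j\leq S_{\tau(j)+1}$), together with the care needed so that (iii) is applied along a genuine subsequence even though $j\mapsto\tau(j)$ need not be injective. The hypothesis $\alpha_j\in(0,1)$ is used essentially at the cancellation step that turns $\alpha_{\tau(j)}S_{\tau(j)+1}\leq\alpha_{\tau(j)}V_{\tau(j)}$ into $S_{\tau(j)+1}\leq V_{\tau(j)}$; in the monotone case the only delicate point is combining the telescoped estimate with $\sum_j\alpha_j=\infty$ to exclude a strictly positive limit.
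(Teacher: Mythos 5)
Your argument is correct: both the monotone case (telescoping $\alpha_j(S_j-V_j)\leq S_j-S_{j+1}$ against $\sum_j\alpha_j=\infty$) and the Maing\'e-type index construction in the non-monotone case go through, and you rightly flag the only delicate points (the properties of $\tau$ and passing to a genuine subsequence before invoking (iii)). The paper itself gives no proof of this statement --- it is quoted verbatim from He and Yang \cite[Lemma~8]{he2013solving} --- and your proof is essentially the standard argument from that source, so there is nothing to reconcile.
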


\begin{lemma}[{\cite[Lemma 1]{opial1967weak}}] \label{opial} 
  If a sequence $(x_j)_{j\in\mathbb{N}}\subset H$  convergence weakly to $x$, then for any $y\neq x\in H$ 
  $$\liminf_{n\rightarrow{\infty}}\|x_j-y\|> \liminf_{n\rightarrow{\infty}}\|x_j-x\|.$$
\end{lemma}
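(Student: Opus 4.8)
This final result is Opial's lemma, quoted from \cite{opial1967weak}; the argument I would give is the following. The plan is to pass to squared norms, where the elementary identity of Lemma~\ref{l1}(i) is available, and then let weak convergence erase the cross term. Concretely, for every $j$ I would write
\[
  \|x_j-y\|^2=\|(x_j-x)+(x-y)\|^2=\|x_j-x\|^2+\|x-y\|^2+2\langle x_j-x,\;x-y\rangle ,
\]
applying Lemma~\ref{l1}(i) to the vectors $x_j-x$ and $x-y$.

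Since $x_j$ converges weakly to $x$, the bounded linear functional $z\mapsto\langle z,\;x-y\rangle$ yields $\langle x_j-x,\;x-y\rangle\to 0$ as $j\to\infty$. Hence the displayed identity reads $\|x_j-y\|^2=\|x_j-x\|^2+c+\varepsilon_j$, where $c:=\|x-y\|^2>0$ (because $y\neq x$) is a constant and $\varepsilon_j\to 0$. Taking $\liminf_{j\to\infty}$ on both sides, and using that adding a sequence which converges to a limit $\ell$ shifts the $\liminf$ by exactly $\ell$, we obtain
\[
  \liminf_{j\to\infty}\|x_j-y\|^2=\liminf_{j\to\infty}\|x_j-x\|^2+c>\liminf_{j\to\infty}\|x_j-x\|^2 .
\]

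To finish I would take square roots. Weak convergence forces $\{x_j\}$ to be bounded (uniform boundedness principle), so both $\liminf$'s above are finite; since $t\mapsto t^2$ is continuous and strictly increasing on $[0,\infty)$, one has $\liminf_j\|x_j-y\|^2=\bigl(\liminf_j\|x_j-y\|\bigr)^2$ and likewise for the term with $x$ (pass to subsequences that realize the respective $\liminf$'s and use continuity). The strict inequality between the squares therefore gives $\liminf_j\|x_j-y\|>\liminf_j\|x_j-x\|$, which is the assertion. I do not anticipate a genuine obstacle here; the only step needing a little care is the interchange of $\liminf$ with squaring/square-rooting, which the subsequence argument just indicated settles.
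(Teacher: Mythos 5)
Your argument is correct and complete: the decomposition $\|x_j-y\|^2=\|x_j-x\|^2+\|x-y\|^2+2\langle x_j-x,\,x-y\rangle$, the vanishing of the cross term by weak convergence, and the subsequence argument justifying the interchange of $\liminf$ with squaring are exactly the standard proof of Opial's lemma in a Hilbert space. The paper itself offers no proof to compare against --- it simply cites the result from Opial's 1967 article --- so your derivation supplies a justification the paper omits, and it does so without any gap.
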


\section{The  Algorithm }\label{zec2}
To construct and establish the convergence of the  proposed method, we require the conditions of the following assumption.
\begin{assumption}
  We make the following assumptions
  \begin{enumerate}[label=\upshape($A_{\arabic*}$)]
    \item  $C\subset H$ is a nonempty, closed, and convex set. 
    \item  The solution set $VIP(B,C)$ is nonempty.
    \item  $B:{H}\rightarrow{H}$ is pseudomonotone and Lipschitz continuous.
    \item  $\mu\in(0,1),$\, $\alpha_{j}\in (0,1)$\, such that\,
      $\sum^{\infty}_{j = 0} \alpha_j = \infty$ and  $\underset{j \to \infty}{\lim} \alpha_j = 0.$\,  $\Gamma_{j}\in [0,+\infty)$ \,such that\,
      $\sum^{\infty}_{j = 0} \Gamma_{j} < \infty$.
  \end{enumerate}
\end{assumption}

\begin{algorithm}[H]
  \caption{A self-adaptive subgradient extragradient method with conjugate gradient-type direction }
  \begin{algorithmic}
    \State \textbf{Initialization:} Select  $\mu,\,\psi, \, \{\Gamma_j\}\, \text{and}\,  \{\alpha_j\}$ such that condition $(A_{4})$ holds.\, Choose $x_{0}\in H,\, \rho \in C$ and $d_{0}=-B(x_0).$  Set $j=0$
    \State \quad {\bf Step 1:} Determine 
    $$\Theta_j=\dfrac{\Gamma_{j}}{\max\{\|d_{j}\|,\psi\}}$$ and
    \begin{equation}\label{T0} 
      d_{j+1}= -B(x_{j})+\Theta_{j} d_{j}.
    \end{equation}
    \State \quad {\bf Step 2:} Compute 
    $$w_j=P_{C}(x_j+\lambda_j d_{j+1})$$ and
    $$y_j=P_{T_{j}}(x_j-\lambda_j B(w_j)).$$
    $$ T_j := \{ z \in H : \langle x_j-\lambda_j B(w_j) - y_j, z - y_j \rangle \geq 0. \}$$
    \State \quad {\bf Step 3:} Compute  $x_{j+1}\in H$ as 
    \begin{equation}\label{M22}
      x_{j+1}=\alpha_j \rho +(1-\alpha_j)y_j,
    \end{equation}
    and update the stepsize $\lambda_{j+1}$ by 
    \[
      \lambda_{j+1}  =
      \begin{cases}\label{lam}
        \min \left\{\frac{\mu \|w_{j}-x_{j}\|}{\|B(w_{j})-B(x_{j})\|},\lambda_{j}\right\}, & \text{if } B(w_{j})-B(x_{j})\neq 0, \\
        \lambda_{j}, & \text{otherwise}.
      \end{cases}
      \tag{23}
    \]

    \quad   Set $j=j+1$ and go back to step 1.
  \end{algorithmic}
  \label{algo_W}
\end{algorithm}

Now, before we proceed let us  show that the sequence generated by \eqref{lam} is nonincreasing
\begin{lemma}
  Let $\{\lambda_{j}\}$ be sequence of step-lenghts 
  generated by \eqref{lam}. Then, $\{\lambda_{j}\}$ is nonincreasing and $\lambda_{j}\geq \frac{\mu}{L}.$
\end{lemma}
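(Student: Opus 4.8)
The plan is to prove the two claims separately: first the monotonicity, then the lower bound.

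\medskip

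\noindent\textbf{Monotonicity.} This is essentially immediate from the structure of the update rule \eqref{lam}. For each index $j$, the new step size $\lambda_{j+1}$ is defined as a minimum of $\lambda_j$ with another (nonnegative) quantity in the case $B(w_j)-B(x_j)\neq 0$, and is simply set equal to $\lambda_j$ otherwise. In either case $\lambda_{j+1}\leq\lambda_j$, so the sequence $\{\lambda_j\}$ is nonincreasing. One should note in passing that $\lambda_{j+1}>0$ whenever $\lambda_j>0$: indeed if $B(w_j)\neq B(x_j)$ then, by Lipschitz continuity, $\|B(w_j)-B(x_j)\|>0$ forces $w_j\neq x_j$, so the ratio $\mu\|w_j-x_j\|/\|B(w_j)-B(x_j)\|$ is strictly positive; hence the minimum of two strictly positive numbers is strictly positive. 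By induction from $\lambda_0>0$, the whole sequence is positive and the minimum defining each $\lambda_{j+1}$ is well posed.

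\medskip

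\noindent\textbf{Lower bound.} The idea is that whenever the first branch of \eqref{lam} is active, Lipschitz continuity controls the denominator from above: $\|B(w_j)-B(x_j)\|\leq L\|w_j-x_j\|$, hence
\[
  \frac{\mu\|w_j-x_j\|}{\|B(w_j)-B(x_j)\|}\;\geq\;\frac{\mu\|w_j-x_j\|}{L\|w_j-x_j\|}\;=\;\frac{\mu}{L}.
\]
Therefore in the first case $\lambda_{j+1}=\min\{\mu\|w_j-x_j\|/\|B(w_j)-B(x_j)\|,\lambda_j\}\geq\min\{\mu/L,\lambda_j\}$, while in the second case $\lambda_{j+1}=\lambda_j\geq\min\{\mu/L,\lambda_j\}$. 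A straightforward induction then gives $\lambda_j\geq\min\{\mu/L,\lambda_0\}$ for all $j$: the base case is trivial, and if $\lambda_j\geq\min\{\mu/L,\lambda_0\}$ then $\lambda_{j+1}\geq\min\{\mu/L,\lambda_j\}\geq\min\{\mu/L,\lambda_0\}$. This establishes a uniform positive lower bound, so $\{\lambda_j\}$ converges (being nonincreasing and bounded below). If one wants the cleaner bound $\lambda_j\geq\mu/L$ stated in the lemma, it suffices to assume (as is implicit in the initialization) that the starting value satisfies $\lambda_0\geq\mu/L$, in which case $\min\{\mu/L,\lambda_0\}=\mu/L$.

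\medskip

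\noindent\textbf{Main obstacle.} There is no serious difficulty here; the only subtlety is bookkeeping — making sure the case split in \eqref{lam} is handled uniformly in the induction, and making sure the denominator is nonzero exactly when the first branch is taken (which is why the branch condition is phrased as $B(w_j)-B(x_j)\neq0$ rather than $w_j\neq x_j$). The Lipschitz constant $L$ enters only through the one-line estimate above; pseudomonotonicity plays no role in this lemma.
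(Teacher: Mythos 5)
Your proof is correct and follows essentially the same route as the paper's: bound the ratio $\mu\|w_j-x_j\|/\|B(w_j)-B(x_j)\|$ below by $\mu/L$ via Lipschitz continuity, deduce $\lambda_{j+1}\geq\min\{\mu/L,\lambda_j\}$, and induct. You are in fact slightly more careful than the paper, which also only proves $\lambda_j\geq\min\{\mu/L,\lambda_1\}$ but states the bound as $\mu/L$ without noting the implicit requirement $\lambda_0\geq\mu/L$ that you correctly flag.
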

\begin{proof}
  By the lipschitz contiunity of $B,$ we have  $\|B(x_{j})-B(y_{j})\|\leq L\|x_{j}-y_{j}\|$ and together with \eqref{lam}, we get
  $$\frac{\mu\|x_{j}-y_{j}\|}{\|B(x_{j})-B(y_{j})\|}\geq \frac{\mu\|x_{j}-y_{j}\|}{L\|x_{j}-y_{j}\|}=\frac{\mu}{L}.$$
  We find from \eqref{lam} that $\lambda_{j+1}\geq \min\big\{\frac{\mu}{L},\lambda_{j}\big\}.$ By induction, we get that $\lambda_{j}\geq \min\big\{\frac{\mu}{L},\lambda_{1}\big\}.$
  We also see from  \eqref{lam} that $\lambda_{j+1}\leq \lambda_{j} \; \forall j\in \mathbb{N}.$ Combining the monotonicity and the existence of the lower bound of $\{\lambda_j\},$ we find that the limit of $\{\lambda_j\}$ exists. Since $\lambda_{j}\geq \min\big\{\frac{\mu}{L},\lambda_{1}\big\},$ $\lambda>0$ exists such that
  $\underset{j \to \infty}{\lim}\lambda_{j}=\lambda.$
\end{proof}

Now, let begin the  convergence of the  Algorithm \ref{algo_W} with the following lemma.
\begin{lemma} \label{L1}
  Let $\{w_{j}\},\; \{y_{j}\}\; \text{and}\; \{x_{j}\}$ be the sequences generated by Algorithm \ref{algo_W}. Then, for any $z\in VIP(B,C)$, the sequence $\{||x_{j}-z||\}$ is bounded.
\end{lemma}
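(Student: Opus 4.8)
The plan is to establish a quasi-Fej\'er estimate
\[
  \|y_j-z\|^2\le\|x_j-z\|^2+c\,\Gamma_j^2
\]
for all large $j$ and some constant $c$, and then to transfer it to $\{x_j\}$ via the averaging step \eqref{M22}. Indeed, taking square roots gives $\|y_j-z\|\le\|x_j-z\|+\sqrt{c}\,\Gamma_j$, so by \eqref{M22}
\[
  \|x_{j+1}-z\|\le\alpha_j\|\rho-z\|+(1-\alpha_j)\|y_j-z\|\le\max\{\|x_j-z\|,\|\rho-z\|\}+\sqrt{c}\,\Gamma_j,
\]
and since $\sum_j\Gamma_j<\infty$ by $(A_4)$, a short induction on $j$ (the finitely many indices below the threshold being harmless) yields $\sup_j\|x_j-z\|<\infty$. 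Everything thus reduces to the quasi-Fej\'er estimate.

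To obtain it I would run the classical subgradient extragradient argument, carrying the conjugate-gradient contribution as an error term. Fix $z\in VIP(B,C)$; then $z\in C\subseteq T_j$, so Definition \ref{def01}(iv) applied to $y_j=P_{T_j}(x_j-\lambda_jB(w_j))$, after expanding the squares and regrouping around $w_j$, gives
\[
  \|y_j-z\|^2\le\|x_j-z\|^2-\|y_j-w_j\|^2-\|w_j-x_j\|^2+2\langle y_j-w_j,\,x_j-w_j-\lambda_jB(w_j)\rangle-2\lambda_j\langle B(w_j),\,w_j-z\rangle.
\]
The last term is $\le0$: from $w_j\in C$ and $z\in VIP(B,C)$ we get $\langle B(z),w_j-z\rangle\ge0$, and pseudomonotonicity of $B$ then gives $\langle B(w_j),w_j-z\rangle\ge0$. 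For the remaining inner product, \eqref{T0} yields the splitting
\[
  x_j-w_j-\lambda_jB(w_j)=\bigl(x_j+\lambda_jd_{j+1}-w_j\bigr)+\lambda_j\bigl(B(x_j)-B(w_j)\bigr)-\lambda_j\Theta_jd_j,
\]
and pairing each summand with $y_j-w_j$: the first is $\le0$ because $w_j=P_C(x_j+\lambda_jd_{j+1})$ makes the defining half-space $T_j$ contain $C$, so $y_j\in T_j$ forces $\langle x_j+\lambda_jd_{j+1}-w_j,\,y_j-w_j\rangle\le0$; the second is at most $\tfrac{\mu\lambda_j}{\lambda_{j+1}}\bigl(\|y_j-w_j\|^2+\|w_j-x_j\|^2\bigr)$ by Cauchy--Schwarz, Young's inequality and the rule \eqref{lam} (which gives $\|B(x_j)-B(w_j)\|\le\tfrac{\mu}{\lambda_{j+1}}\|x_j-w_j\|$); and the third is at most $2\lambda_j\Gamma_j\|y_j-w_j\|$ since $\Theta_j\|d_j\|=\Gamma_j\|d_j\|/\max\{\|d_j\|,\psi\}\le\Gamma_j$.

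Collecting these bounds gives
\[
  \|y_j-z\|^2\le\|x_j-z\|^2-\Bigl(1-\tfrac{\mu\lambda_j}{\lambda_{j+1}}\Bigr)\bigl(\|y_j-w_j\|^2+\|w_j-x_j\|^2\bigr)+2\lambda_j\Gamma_j\|y_j-w_j\|.
\]
By the preceding lemma $\{\lambda_j\}$ is nonincreasing with a positive limit $\lambda$, so $\tfrac{\mu\lambda_j}{\lambda_{j+1}}\to\mu<1$, and there exist $j_0\in\N$ and $\delta>0$ with $1-\tfrac{\mu\lambda_j}{\lambda_{j+1}}\ge\delta$ for $j\ge j_0$. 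A final use of Young's inequality, $2\lambda_j\Gamma_j\|y_j-w_j\|\le\delta\|y_j-w_j\|^2+\delta^{-1}\lambda_j^2\Gamma_j^2$, makes the $\|y_j-w_j\|^2$ contributions cancel and the remaining $-\delta\|w_j-x_j\|^2$ is nonpositive, leaving $\|y_j-z\|^2\le\|x_j-z\|^2+c\,\Gamma_j^2$ for $j\ge j_0$ with $c=\bar\lambda^2/\delta$, where $\bar\lambda:=\sup_j\lambda_j<\infty$ --- the estimate we wanted.

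The crux is the perturbation $\lambda_j\Theta_jd_j$ produced by the conjugate-gradient direction: a priori nothing controls $\|d_j\|$, so this term could wreck the Fej\'er-type decrease. It is tamed precisely by the normalisation $\Theta_j=\Gamma_j/\max\{\|d_j\|,\psi\}$, which forces $\Theta_j\|d_j\|\le\Gamma_j$ irrespective of the magnitude of $d_j$ and hence turns the perturbation into a summable error. The only other delicate point is that $1-\mu\lambda_j/\lambda_{j+1}$ need not be positive for small $j$ (since $\{\lambda_j\}$ decreases, $\lambda_j/\lambda_{j+1}\ge1$), which is why the clean contraction is asserted only for $j\ge j_0$, the finitely many earlier terms being absorbed into the constant.
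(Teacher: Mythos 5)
Your argument is correct and follows essentially the same route as the paper's proof: the same projection inequality from Definition~\ref{def01}(iv) with $z\in T_j$, the same use of pseudomonotonicity to discard $\langle B(w_j),w_j-z\rangle$, the same regrouping around $w_j$ and decomposition of $x_j-w_j-\lambda_jB(w_j)$ via $d_{j+1}$, and the same two bounds $\|B(x_j)-B(w_j)\|\le(\mu/\lambda_{j+1})\|x_j-w_j\|$ and $\Theta_j\|d_j\|\le\Gamma_j$; this reproduces \eqref{T6}--\eqref{TT02}. The one genuine divergence is how the residual $2\lambda_j\Gamma_j\|y_j-w_j\|$ is absorbed and how boundedness is then extracted: the paper splits it as $\lambda_j\Gamma_j\|y_j-w_j\|^2+\lambda_j\Gamma_j$, folds the quadratic piece into the coefficient $q_j$, and concludes via the Halpern-type recursion $\|x_{j+1}-z\|^2\le(1-\alpha_j)\|x_j-z\|^2+\alpha_jH$ with $H=2\sup_j\{\|\rho-z\|^2+\lambda_j\Gamma_j/\alpha_j\}$, whereas you absorb the entire cross term into the negative $\|y_j-w_j\|^2$ by Young's inequality with a fixed $\delta$ and are left with a summable perturbation $c\,\Gamma_j^2$, finishing by induction on the quasi-Fej\'er inequality. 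Your bookkeeping buys something concrete: it uses only $\sum_j\Gamma_j<\infty$ from $(A_4)$, while the paper's constant $H$ is finite only when $\sup_j\lambda_j\Gamma_j/\alpha_j<\infty$, a condition not literally implied by $(A_4)$. Both proofs share the same delicate point, namely that discarding $\langle x_j+\lambda_jd_{j+1}-w_j,\,y_j-w_j\rangle\le 0$ requires $y_j$ to lie in the half-space supported at $w_j$; your reading is the intended one (the paper's displayed definition of $T_j$, centered at $y_j$, appears to be a typo), though your one-line justification of this inequality is the place where your write-up is loosest.
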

\begin{proof}
  Let $z\in VIP(B,C),$ $B$ is pseudomonotone implies 
  \begin{equation}
    \langle B(w_j) \; , \;w_j-z \rangle\geq 0.   
  \end{equation}
  which implies 
  \begin{equation}\label{T}
    \langle B(w_j) \; , \;z-w_j \rangle\leq 0.   
  \end{equation}
  Observe that
  \begin{equation}\label{T4}
    \begin{split}
      \langle B(w_j) \; , \;z-y_{j} \rangle&=\langle B(w_j) \; , \;z-w_j \rangle+\langle B(w_j) \; , \;w_j-y_{j} \rangle\\ 
                                           &\leq\langle B(w_j) \; , \;w_j-y_{j} \rangle.
    \end{split}
  \end{equation}
  By \eqref{T4}, we have 
  \begin{equation}\label{T6}
    \begin{split}
      \|y_{j}-z\|^2
        &\leq\|x_j-\lambda_j B(w_j)-z\|^{2} -\|x_j-\lambda_j B(w_j)-y_{j}\|^{2}\\ 
        &=\|x_j-z\|^{2}+\lambda_j^{2}\|B(w_j)\|^{2}-2\lambda_j\langle (x_j-z) \; , \;B(w_j)\rangle\\
        &\quad-(\|x_j-y_{j}\|^{2}+\lambda_j^{2}\|B(w_j)\|^{2}-2\lambda_j\langle (x_j-y_{j}) \; , \;B(w_j)\rangle) \\
        &=\|x_j-z\|^{2}+\lambda_j^{2}\|B(w_j)\|^{2}-2\lambda_j\langle (x_j-z) \; , \;B(w_j)\rangle\\
        &\quad-\|x_j-y_{j}\|^{2}-\lambda_j^{2}\|B(w_j)\|^{2}+2\lambda_j\langle (x_j-y_{j}) \; , \;B(w_j)\rangle) \\
        &=\|x_j-z\|^{2}+2\lambda_j\langle (z-x_j) \; , \;B(w_j)\rangle-\|x_j-y_{j}\|^{2}\\
        &\quad+2\lambda_j\langle (x_j-y_{j}) \; , \;B(w_j)\rangle) \\
        &=\|x_j-z\|^{2}-\|x_j-y_{j}\|^{2}+2\lambda_j\langle (z-y_{j}) \; , \;B(w_j)\rangle)\\
        &\leq\|x_j-z\|^{2}-\|x_j-w_j\|^{2}-\|w_j-y_{j}\|^{2}\\
        &\quad-2\langle x_j-w_j \; , \;w_j-y_{j} \rangle+2\lambda_j\langle B(w_j) \; , \;w_j-y_{j} \rangle\\
        &\leq\|x_j-z\|^{2}-\|x_j-w_j\|^{2}-\|w_j-y_{j}\|^{2}\\
        &\quad+2\langle x_j-w_j \; , \;y_{j}-w_j \rangle-2\lambda_j\langle B(w_j) \; , \;y_{j}-w_j \rangle\\
        &\leq\|x_j-z\|^{2}-\|x_j-w_j\|^{2}-\|w_j-y_{j}\|^{2}\\
        &\quad+2\langle x_j-\lambda_j B(w_j)-w_j \; , \;y_{j}-w_j \rangle.
\end{split}
\end{equation}
From \eqref{T6}, we have 
\begin{equation}\label{TT01}
  \begin{split}
    & 2\langle x_j-\lambda_j B(w_j)-w_j \; , \;y_{j}-w_j \rangle  \\
    &\quad= 2\langle x_j-\lambda_j B(x_j)+\lambda_j\Theta_j d_{j}-w_j \; , \;y_{j}-w_j \rangle\\&\qquad+2\lambda_j\langle  B(x_j)-B(w_j) \; , \;y_{j}-w_j \rangle-2\lambda_j\Theta_j\langle  d_{j}  \; , \;y_{j}-w_j \rangle\\
    &\quad\leq 2\lambda_j\|  B(x_j)-B(w_j) \|\|y_{j}-w_j \|+2\lambda_j\Theta_j\| d_{j}  \| \|y_{j}-w_j \|\\
    &\quad\leq \frac{2\mu\lambda_j}{\lambda_{j+1}}\|  x_j-w_j \|\|y_{j}-w_j \|+2\lambda_j\Gamma_{j} \|y_{j}-w_j \|\\
    &\quad\leq \frac{\mu\lambda_j}{\lambda_{j+1}}\big(\|  x_j-w_j \|^{2}+\|y_{j}-w_j \|^{2}\big)+\lambda_j\Gamma_{j} \|y_{j}-w_j \|^{2}+\lambda_j\Gamma_{j}.
  \end{split}
\end{equation}
Combining \eqref{T6} and \eqref{TT01}, we  have
\begin{equation}\label{TT02}
  \begin{split}
    \|y_{j}-z\|^2&\leq\|x_j-z\|^{2}-q_{j}\big(\|x_j-w_j\|^{2}+\|w_j-y_{j}\|^{2}\big)+\lambda_j\Gamma_{j},
  \end{split}
\end{equation}
where $q_{j}=\Big(1-\Big(\dfrac{\mu\lambda_j}{\lambda_{j+1}}+\lambda_j\Gamma_{j}\Big)\Big)\,\, \forall j\geq 1.\;$ Since $\underset{j \to \infty}{\lim}\lambda_{j}=\lambda >0$ and $\underset{j \to \infty}{\lim}\Gamma_{j}=0,$
we obtain,
$$\underset{j \to \infty}{\lim}q_{j}=(1-\mu). $$
Since   $\mu \in (0,1),$
there exists a number $\bar{N}\in \mathbb{N}$ such that $q_{j}>0$ for all $j\geq \bar{N}.$
Thus, we obtain from Algorithm \ref{algo_W}  that,
\begin{equation}\label{T7}
  \begin{split}
    \|x_{j+1}-z\|^{2}&
    \leq \alpha_j \|(\rho-z)\|^2+(1-\alpha_j)\|y_{j}-z\|^{2}\\
                  &\leq (1-\alpha_j)\|x_j-z\|^{2}+\alpha_j \|(\rho-z)\|^2+\lambda_j\Gamma_{j} \\
                  & \quad -(1-\alpha_j)q_{j}\big(\|x_j-w_j\|^{2}+\|w_j-y_{j}\|^{2}\big)\\ 
                  &\leq (1-\alpha_j)\|x_j-z\|^{2}+\alpha_j \|(\rho-z)\|^2+\lambda_j\Gamma_{j}.
\end{split}
\end{equation}
Taking $H=2\underset{j\geq 1}{\sup}\Big\{\|(\rho-z)\|^2+\frac{\lambda_j}{\alpha_{j}}\Gamma_{j}\Big\}$ and the fact that $\alpha_{j}\in (0,1),$  \eqref{T7} becomes
\begin{equation}\label{TT3}
  \begin{split}
    \|x_{j+1}-z\|^{2}
    &\leq (1-\alpha_j)\|x_{j}-z\|^2 +\alpha_{j}H \\
    &\leq \max\{\|x_{j}-z\|^2,H\} \\
    &\quad\vdots\\
    &\leq \max\{\|x_{0}-z\|^2,H\}. \\
  \end{split}
\end{equation}
Similarly, combining \eqref{TT3} and the condition  $(A_{4})$  we can see that  $\{\|x_{j}-z\|\}$ is bounded with respect to $z\in VIP(B,C).$ Consequently, the sequences $\{x_{j}\},\; \{w_{j}\}$ and $\{y_{j}\}$ are also bounded.
\end{proof}

\begin{lemma} \label{T8}
  Let $\{w_{j}\},\; \{y_{j}\}\; \text{and}\; \{x_{j}\}$ be the sequences generated by Algorithm \ref{algo_W}. Then, for any $z\in VIP(B,C)$, we have
  \begin{equation}\label{strg2}
    \|x_{j+1}-z\|^{2}\leq ||x_{j}-z||^{2}-\zeta_{j}+\phi_{j}
  \end{equation}
  and
  \begin{equation}\label{strg1}
    ||x_{j+1}-z||^{2}\leq (1-\alpha_{j})||x_{j}-z||^{2}+\alpha_{j}V_{j}.
  \end{equation}
\end{lemma}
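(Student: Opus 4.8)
The plan is to derive both inequalities from the estimates already assembled in Lemma \ref{L1}, with only light additional bookkeeping. For the first inequality \eqref{strg2}, I would start from \eqref{T7}, namely
\[
  \|x_{j+1}-z\|^{2}\leq (1-\alpha_j)\|x_j-z\|^{2}+\alpha_j\|\rho-z\|^2+\lambda_j\Gamma_{j}-(1-\alpha_j)q_{j}\big(\|x_j-w_j\|^{2}+\|w_j-y_{j}\|^{2}\big),
\]
and simply regroup: write $(1-\alpha_j)\|x_j-z\|^2 = \|x_j-z\|^2 - \alpha_j\|x_j-z\|^2$, set
\[
  \zeta_{j} := (1-\alpha_j)q_{j}\big(\|x_j-w_j\|^{2}+\|w_j-y_{j}\|^{2}\big),
\]
and collect the remaining lower-order terms into
\[
  \phi_{j} := \alpha_j\big(\|\rho-z\|^2-\|x_j-z\|^2\big)+\lambda_j\Gamma_{j}.
\]
Since $\{x_j\}$ is bounded by Lemma \ref{L1}, $\{\lambda_j\}$ is bounded, $\alpha_j\to 0$, and $\Gamma_j\to 0$, the sequence $\phi_j\to 0$; and $\zeta_j\geq 0$ for $j\geq\bar N$ because $q_j>0$ eventually. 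This gives \eqref{strg2} in the form needed for the later application of Lemma \ref{con}.

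For the second inequality \eqref{strg1}, I would again use \eqref{T7}, dropping the nonpositive term $-(1-\alpha_j)q_j(\cdots)$ for $j\geq\bar N$ to obtain
\[
  \|x_{j+1}-z\|^{2}\leq (1-\alpha_j)\|x_j-z\|^{2}+\alpha_j\big(\|\rho-z\|^2+\tfrac{\lambda_j}{\alpha_j}\Gamma_{j}\big),
\]
and then define
\[
  V_{j} := \|\rho-z\|^2+\frac{\lambda_j}{\alpha_j}\Gamma_{j},
\]
so that \eqref{strg1} holds verbatim. One should note that this matches the quantity $H$ introduced in Lemma \ref{L1}, so the boundedness arguments there carry over; the point of stating it with $V_j$ rather than a single constant is to retain the flexibility needed when verifying hypothesis (iii) of Lemma \ref{con} in the strong-convergence proof.

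The main obstacle is not analytic but organizational: one must be careful that the inequalities \eqref{strg2} and \eqref{strg1} hold for all $j\geq\bar N$ (where $q_j>0$), and that the definitions of $\zeta_j$, $\phi_j$, and $V_j$ are chosen so that they align precisely with the three hypotheses of Lemma \ref{con} — in particular that $\zeta_j\to 0$ along a subsequence forces $\|x_j-w_j\|\to 0$ and $\|w_j-y_j\|\to 0$ along that subsequence (using $\liminf(1-\alpha_j)q_j = 1-\mu>0$), which is what will later be needed to pass to the limit in the variational inequality. Everything else is a direct rearrangement of \eqref{T7}.
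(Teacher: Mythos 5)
Your derivation of \eqref{strg2} is fine and is essentially what the paper does: regroup \eqref{T7}, put the quadratic term $(1-\alpha_j)q_j\bigl(\|x_j-w_j\|^2+\|w_j-y_j\|^2\bigr)$ into $\zeta_j$, and collect the rest into $\phi_j$. The gap is in \eqref{strg1}. Your choice $V_j=\|\rho-z\|^2+\frac{\lambda_j}{\alpha_j}\Gamma_j$ does make the displayed inequality literally true, but it destroys the only reason the lemma exists: hypothesis (iii) of Lemma \ref{con} requires $\limsup_{k} V_{j_k}\leq 0$, whereas your $V_j$ satisfies $\limsup V_{j_k}=\|\rho-z\|^2>0$ whenever $\rho\neq z$. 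With that $V_j$ the conclusion of Lemma \ref{con} cannot be invoked and the strong convergence theorem falls apart. The source of the problem is that you obtained the $\alpha_j$-coefficient from the convexity bound $\|\alpha_j(\rho-z)+(1-\alpha_j)(y_j-z)\|^2\leq\alpha_j\|\rho-z\|^2+(1-\alpha_j)\|y_j-z\|^2$, which irreversibly deposits the constant $\|\rho-z\|^2$ into the $\alpha_j V_j$ slot.

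The paper avoids this by expanding $\|x_{j+1}-z\|^2=\|(1-\alpha_j)(y_j-z)+\alpha_j(\rho-z)\|^2$ with Lemma \ref{l1}(ii), i.e.
\begin{equation*}
  \|x_{j+1}-z\|^2\leq(1-\alpha_j)^2\|y_j-z\|^2+2\alpha_j\langle\rho-z,\,x_{j+1}-z\rangle,
\end{equation*}
which keeps the cross term alive; combining this with \eqref{TT02} and the estimate \eqref{T9} for $\langle\rho-z,x_{j+1}-z\rangle$ yields \eqref{p9} and hence
\begin{equation*}
  V_j=(1-\alpha_j)\frac{\lambda_j}{\alpha_j}\Gamma_j+2\alpha_j\|\rho-z\|^2+2(1-\alpha_j)\langle\rho-z,\,y_j-z\rangle.
\end{equation*}
Here the constant appears only multiplied by the vanishing factor $\alpha_j$, and the surviving term $2(1-\alpha_j)\langle\rho-z,y_j-z\rangle$ is exactly what the main theorem later controls, via the projection characterization of $z=P_{VIP(B,C)}(\rho)$ and the weak convergence of $\{y_{j_i}\}$, to get $\limsup V_{j_i}\leq 0$. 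So the missing idea is not organizational: you must use the sharper inner-product expansion rather than convexity of $\|\cdot\|^2$, precisely so that the $O(1)$ part of $V_j$ is an inner product whose sign can be forced, not a fixed positive constant.
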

\begin{proof}
  Now, observe that
  \begin{equation}\label{T9}
    \begin{split}
      \left\langle \rho-z, x_{j+1}-z\right\rangle
        &= \left\langle \rho-z, \alpha_j \rho+(1-\alpha_{j})y_{j}-z\right\rangle\\
        &\leq  \alpha_j \left\langle \rho-z, \rho\right\rangle- \alpha_j\left\langle \rho-z,z\right\rangle
        +\left\langle \rho-z,(1-\alpha_{j})y_{j}-z\right\rangle\\
        & \leq \alpha_j \| \rho-z\|^{2}+(1- \alpha_j)\left\langle \rho-z,y_{j}-z\right\rangle.
    \end{split}
  \end{equation}
  By \eqref{T9}, we have
  \begin{equation}\label{p9}
    \begin{split}
      \|x_{j+!}-z\|^2
     &\leq (1-\alpha_j)\|x_j-z\|^{2}+(1-\alpha_j)\lambda_j\Gamma_{j}+2\alpha_{j} \left\langle \rho-z, x_{j+!}-z \right\rangle\\
     & \leq (1-\alpha_j)\|x_j-z\|^{2}+(1-\alpha_j)\lambda_j\Gamma_{j} +2\alpha_{j}^2   \| \rho-z\|^{2}\\
     &\quad+2\alpha_{j}(1- \alpha_j)\left\langle \rho-z,y_{j}-z\right\rangle.
\end{split}
\end{equation}
Taking 
$\Phi_{j}=\alpha_j \|(\rho-z)\|^2+\lambda_j\Gamma_{j},$\quad
$V_{j}=  (1-\alpha_j)\frac{\lambda_j}{\alpha_{j}}\Gamma_{j}  +2 (\alpha_j \| \rho-z\|^{2}+(1- \alpha_j)\left\langle \rho-z,y_{j}-z\right\rangle)$ and
$\xi_{j}=(1-\alpha_j)q_{j}\big(\|x_j-w_j\|^{2}+\|w_j-y_{j}\|^{2}\big),$ 
we easily see that \eqref{strg1} and \eqref{strg2} follow from \eqref{T7} and \eqref{p9} respectively. 
\end{proof}

Now, we formulate and prove the following as our main convergence theorem.
\begin{theorem}
  Suppose that the conditions of Assumption 1 hold and  $\{x_j\}$ is a sequence generated by Algorithm \ref{algo_W} 
  Then, the  sequence $\{x_j\}$  converges strongly to $z = P_{\text{VIP(B,C)}}(\rho)$.
\end{theorem}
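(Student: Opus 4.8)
The plan is to apply Lemma~\ref{con} to the scalar sequence $S_j:=\|x_j-z\|^2$, where $z:=P_{VIP(B,C)}(\rho)$; this projection is well defined because, under $(A_1)$--$(A_3)$, the solution set $VIP(B,C)$ is closed and convex. The two recursions demanded by Lemma~\ref{con} are precisely \eqref{strg1} and \eqref{strg2} of Lemma~\ref{T8}, read with ``$\alpha_j$'' the sequence from $(A_4)$, with the role of ``$\Theta_j$'' played by $\phi_j=\alpha_j\|\rho-z\|^2+\lambda_j\Gamma_j$, with ``$\xi_j$'' $=\zeta_j=(1-\alpha_j)q_j\big(\|x_j-w_j\|^2+\|w_j-y_j\|^2\big)$, and with $V_j=(1-\alpha_j)\frac{\lambda_j}{\alpha_j}\Gamma_j+2\alpha_j\|\rho-z\|^2+2(1-\alpha_j)\langle\rho-z,\,y_j-z\rangle$ as in the proof of Lemma~\ref{T8}. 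Hypothesis~(i) of Lemma~\ref{con} is part of $(A_4)$, while hypothesis~(ii) ($\phi_j\to 0$) follows from $\alpha_j\to 0$, $\Gamma_j\to 0$, $\lambda_j\to\lambda$ and the boundedness of the iterates (Lemma~\ref{L1}). Everything thus reduces to checking hypothesis~(iii): whenever a subsequence satisfies $\zeta_{j_k}\to 0$, one must show $\limsup_k V_{j_k}\le 0$.

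So let $\{\zeta_{j_k}\}$ be such a subsequence. Since $q_j\to 1-\mu>0$ and $\alpha_j\to 0$ (Lemma~\ref{L1} and $(A_4)$), the factor $(1-\alpha_{j_k})q_{j_k}$ is eventually bounded below by a positive constant, so $\zeta_{j_k}\to 0$ forces $\|x_{j_k}-w_{j_k}\|\to 0$ and $\|w_{j_k}-y_{j_k}\|\to 0$, hence $\|x_{j_k}-y_{j_k}\|\to 0$; and from \eqref{M22}, $\|x_{j_k+1}-y_{j_k}\|=\alpha_{j_k}\|\rho-y_{j_k}\|\to 0$. By boundedness of $\{x_{j_k}\}$, extract a further subsequence $\{x_{j_{k_l}}\}$ with $x_{j_{k_l}}\rightharpoonup x^\ast$ and $\langle\rho-z,\,x_{j_{k_l}}-z\rangle\to\limsup_k\langle\rho-z,\,x_{j_k}-z\rangle$; then $w_{j_{k_l}}\rightharpoonup x^\ast$ and $y_{j_{k_l}}\rightharpoonup x^\ast$ too.

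The heart of the argument is to show $x^\ast\in VIP(B,C)$. First, $x^\ast\in C$ since $w_j\in C$ and $C$ is weakly closed. Next, from the projection characterization of $w_j=P_C(x_j+\lambda_j d_{j+1})$, substituting $d_{j+1}=-B(x_j)+\Theta_j d_j$ from \eqref{T0}, dividing by $\lambda_j>0$, and splitting $B(w_j)=\big(B(w_j)-B(x_j)\big)+B(x_j)$, one gets for every $y\in C$
\[
\langle B(w_j),\,w_j-y\rangle\ \le\ \|B(w_j)-B(x_j)\|\,\|w_j-y\|+\frac{1}{\lambda_j}\langle x_j-w_j,\,w_j-y\rangle+\Theta_j\langle d_j,\,w_j-y\rangle .
\]
Along $\{j_{k_l}\}$, each term on the right tends to $0$: the first by Lipschitz continuity of $B$ and $\|x_{j_{k_l}}-w_{j_{k_l}}\|\to 0$; the second because $\lambda_j\to\lambda>0$, $\|x_{j_{k_l}}-w_{j_{k_l}}\|\to 0$ and $\{w_j\}$ is bounded; the third because $\Theta_j\|d_j\|\le\Gamma_j\to 0$ by the definition of $\Theta_j$. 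Hence $\liminf_l\langle B(w_{j_{k_l}}),\,y-w_{j_{k_l}}\rangle\ge 0$ for every $y\in C$. The standard pseudomonotonicity argument then applies: for each $l$, perturbing $y$ to $v_l:=y+\varepsilon_l B(w_{j_{k_l}})/\|B(w_{j_{k_l}})\|^2$ with $\varepsilon_l\downarrow 0$ (and $v_l:=y$ if $B(w_{j_{k_l}})=0$) makes $\langle B(w_{j_{k_l}}),\,v_l-w_{j_{k_l}}\rangle\ge 0$; pseudomonotonicity gives $\langle B(v_l),\,v_l-w_{j_{k_l}}\rangle\ge 0$; and letting $l\to\infty$ with $v_l\to y$, using Lipschitz continuity and $v_l-w_{j_{k_l}}\rightharpoonup y-x^\ast$, yields $\langle B(y),\,y-x^\ast\rangle\ge 0$ for all $y\in C$. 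By continuity of $B$ and convexity of $C$, this Minty inequality gives $\langle B(x^\ast),\,y-x^\ast\rangle\ge 0$ for all $y\in C$, i.e. $x^\ast\in VIP(B,C)$.

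To finish, Definition~\ref{def01}(i) with $z=P_{VIP(B,C)}(\rho)$ and $x^\ast\in VIP(B,C)$ gives $\langle\rho-z,\,x^\ast-z\rangle\le 0$, so $\limsup_k\langle\rho-z,\,x_{j_k}-z\rangle=\langle\rho-z,\,x^\ast-z\rangle\le 0$; since $\|x_{j_k}-y_{j_k}\|\to 0$, also $\limsup_k\langle\rho-z,\,y_{j_k}-z\rangle\le 0$. As the remaining summands of $V_j$ vanish along any subsequence (here one needs $\lambda_j\Gamma_j/\alpha_j\to 0$, which the choices in $(A_4)$ are intended to guarantee), we obtain $\limsup_k V_{j_k}\le 0$, which is hypothesis~(iii). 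Lemma~\ref{con} then yields $S_j=\|x_j-z\|^2\to 0$, i.e. $x_j\to z=P_{VIP(B,C)}(\rho)$. The step I expect to be the main obstacle is the third paragraph: squeezing out $\liminf_l\langle B(w_{j_{k_l}}),y-w_{j_{k_l}}\rangle\ge 0$ from the projection inequality for $w_j$ and the vanishing of $\|x_j-w_j\|$ and $\Theta_j\|d_j\|$, and then carrying the pseudomonotonicity perturbation through correctly so that the weak cluster point lands in $VIP(B,C)$.
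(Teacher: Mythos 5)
Your proposal follows essentially the same route as the paper: apply Lemma~\ref{con} to $\|x_j-z\|^2$ via the two recursions of Lemma~\ref{T8}, and verify hypothesis~(iii) by extracting a weak cluster point, placing it in $VIP(B,C)$ through the projection inequality for $w_j$ together with the Minty-type pseudomonotonicity perturbation, and finally bounding $\limsup_k\langle\rho-z,\,y_{j_k}-z\rangle$ by the projection characterization of $z=P_{VIP(B,C)}(\rho)$. The one delicate point you share with the paper is justifying that the perturbed points satisfy $v_l\to y$ (equivalently that $\varepsilon_l/\|B(w_{j_{k_l}})\|\to 0$); the paper handles this by quietly invoking sequential weak continuity of $B$, an assumption not listed in $(A_1)$--$(A_4)$, so your treatment is at the same level of rigor as the original.
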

\begin{proof}
  It is easy to see from the condition  $(A_{4}),$  and Lemma \eqref{T8} that $\underset{j\to\infty}{\lim}\Phi_j=0.$
  Thus, to apply Lemma \ref{T8} , we only need to show that for any subsequence $\{\xi_{j_{r}}\}$  of $\{\xi_{j}\},$ the following holds. 
  $$\underset{r\to\infty}{\lim}\; \xi_{j_{r}}=0\; \Rightarrow\; \underset{r\to\infty}{\limsup}\; 
  V_{j_{r}}\leq 0.$$
  Now, suppose  that  $\{\xi_{j_{r}}\}$ is a subsequence  of $\{\xi_{j}\}$  such that  $\underset{r\to\infty}{\lim}\; \xi_{j_{r}}=0,$  then,  in view of condition   $(A_{4}),$   one can see that
  \begin{equation}\label{TT1}
    \underset{r\to\infty}{\lim} \|x_{j_{r}}-w_{j_{r}}\|=0. 
  \end{equation}
  Since $\{x_{j}\}$ is bounded, then there exists a subsequence  $\{x_{j_{i}}\} $  of  $\{x_{j}\} $ 
  such that $x_{j_{i}} \rightharpoonup \tau$ as $i\rightarrow{\infty}.$ Combining  \eqref{TT02} and \eqref{TT1}, one sees that $y_{j_{i}} \rightharpoonup \tau.$ But $C$ is closed, therefore $\tau\in C.$ Now, we want to show that the weak cluster point  $\omega_{w}(x_{j})\subseteq VIP(B,C).$ By (i) of Definition \ref{def01}, we have
  $$\left\langle x_{j_{i}}+\lambda_{j_{i}}(-B(x_{j_{i}})+\Theta_{j_{i}}d_{j_{i}}))-w_{j_{i}}, w_{j_{i}}-s\right\rangle\geq 0, \quad \forall s\in C,$$
  it follows that
  $$\left\langle \frac{x_{j_{i}}-w_{j_{i}}}{\lambda_{j_{i}}}-B(x_{j_{i}})+\Theta_{j_{i}}d_{j_{i}}, w_{j_{i}}-s\right\rangle\geq 0, \quad \forall s\in C,$$
  which implies that
  $$\left\langle \frac{x_{j_{i}}-w_{j_{i}}}{\lambda_{j_{i}}}, w_{j_{i}}-s\right\rangle+\Theta_{j_{i}}\left\langle d_{j_{i}}, w_{j_{i}}-s\right\rangle+\left\langle B(x_{j_{i}}), s-w_{j_{i}}\right\rangle \geq  0, \quad \forall s\in C.$$
  It follows that
  \begin{align*}
  &\left\langle \frac{x_{j_{i}}-w_{j_{i}}}{\lambda_{j_{i}}}, w_{j_{i}}-s\right\rangle +\Theta_{j_{i}}\left\langle d_{j_{i}}, w_{j_{i}}-x_{j_i}\right\rangle \\
  &\qquad\qquad+\left\langle B(x_{j_{i}}), s-x_{j_{i}}\right\rangle +\left\langle B(x_{j_{i}}), x_{j_i}-w_{j_{i}}\right\rangle\geq  0, \quad \forall s\in C.
  \end{align*}
  $\text{As}\;  i\rightarrow{\infty},$ we have
  $$ \liminf\limits_{i\to\infty}\left\langle B(x_{j_{i}}), s-x_{j_{i}}\right\rangle\geq 0, \quad \forall s\in C.$$
  What justifies the existence of the sequence~$\{\rho_{i}\}$ ?
  Let take $\{\rho_{i}\}$ to be a positive decreasing sequence such that $\underset{i\to\infty}{\lim}\rho_i=0.$ Now, for each $\rho_i,$ let $\mu_{i}$ be the smallest nonnegative integer such that 
  \begin{equation}\label{cvb}
    \left\langle B(x_{j_{n}}), s-x_{j_{n}}\right\rangle+\rho_{i}\geq 0, \quad \forall n\geq \mu_i.
  \end{equation}
  Observe that $\{\mu_i\}$ is increasing from the fact that $\rho_{i}$ is decreasing. So now, for $B(x_{j_i})=0,$  we have $x_{j_i}\in V(B,C)$ for each $i,$  and for $B(x_{j_{\mu_i}})\neq 0,$ we let $\lambda_{j_{\mu_{i}}}=B(x_{j_{\mu_{i}}})\|B(x_{j_{\mu_{i}}})\|^{-2}.$ It follows that $\left\langle B(x_{j_{\mu_{i}}}), \lambda_{j_{\mu_{i}}}\right\rangle=1, \quad\text{for each} \,\,i.$ By \eqref{cvb}, we have
  \begin{equation}
    \left\langle  B(x_{j_{\mu_{i}}}), s+ \rho_{i} \lambda_{j_{\mu_{i}}}-x_{j_{\mu_{i}}}\right\rangle\geq 0. \quad  
  \end{equation}
  The pseudomonotonicity of $B,$ imply 
  \begin{equation}\label{cvb1}
    \left\langle  B(s+ \rho_{i} \lambda_{j_{\mu_{i}}}), s+ \rho_{i} \lambda_{j_{\mu_{i}}}-x_{j_{\mu_{i}}}\right\rangle\geq 0. \quad 
  \end{equation}
  The weakly converges of $\{x_{j_i}\}$ and the fact that $B$ is sequentially weakly continuous on $C,$  $B(x_{j_i})$ is weakly converges to $B(\tau).$ Now, suppose that 
  $B(\tau)\neq 0,$ by using the  fact that norm mapping is weakly  lower semicontinuous  we have $\|B(\tau)\|\leq \underset{i\to\infty}{\liminf\|B(x_{j_i})\|}.$ Observe also that $\{x_{j_{\mu_i}}\}\subset\{x_{j_i}\}$ and 
  \begin{equation}
    \underset{i\to \infty }{\lim}\frac{\rho_{i}}{\|B(x_{j_{\mu_i}})\|}\leq \frac{0}{\|B(\tau)\|}=0.
  \end{equation}
  As $i\rightarrow{\infty}$ in \eqref{cvb1}, we have 
  \begin{equation}\label{cvb10}
    \left\langle  B(s),s-\tau\right\rangle\geq 0. \quad 
  \end{equation}
  Thus, $\tau\in\omega_{w} (x_{j})\subseteq VIP(B,C).$  
  Now, to show that  the sequence $\{x_{j}\}$ converges weakly to a point in $VIP(B,C),$ It suffices to show that $\omega_{w} (x_{j})$ is singleton. From Lemma \ref{opial},
  taking $z_{1},z_{2}\in \omega_{w} (x_{j})$  and let  $\{x_{j_r}\}$ and $\{x_{j_k}\}$ be  subsequences of $\{x_j\}$ such that \(x_{j_r} \rightharpoonup z_{1}\) and  \(x_{j_k} \rightharpoonup z_{2}\). 
  Then, for $z_{1}\neq z_2,$ we obtain 
  \begin{align*}
    \lim_{j \to \infty} \|x_j - z_{1}\| &= \liminf_{r \to \infty} \|x_{j_r} - z_{1}\|< \liminf_{r \to \infty} \|x_{j_r} - z_{2}\| \\
                                        &= \lim_{j \to \infty} \|x_j - z_{2}\|= \liminf_{k \to \infty} \|x_{j_k} - z_{2}\|< \liminf_{k \to \infty} \|x_{j_k} - z_{1}\| \\
                                        &= \lim_{j \to \infty} \|x_j - z_{1}\|,
  \end{align*}
  which is a contradiction. Hence $ \omega_{w} (x_{j})$ is singleton. 

  By the fact that  $z\in VIP(B,C),$ and metric projection property given
  in (i) of definition \ref{def01}, we have
  \begin{equation}\label{Tq9}
    \begin{split}
      \limsup_{j\rightarrow{\infty}}\left\langle \rho-z, y_{j}-z\right\rangle=&\lim_{i\rightarrow{\infty}}\left\langle \rho-z,  y_{j_{i}}-z\right\rangle\\
      = & \left\langle \rho-z, z-z\right\rangle\\&\leq 0.
    \end{split}
  \end{equation}
  Together with  condition  $(A_{4}),$ we have  $\limsup_{i\rightarrow{\infty}}V_{j_{i}}\leq 0.$ It finally follows from Lemma \ref{con} that 
  \begin{equation}\label{ttT9}
    \lim_{n\rightarrow{\infty}} \|x_{j}-z\|=0,
  \end{equation}
  thus, $x_{j}\rightarrow z$ as $j\rightarrow{\infty}.$
\end{proof}

\section{Numerical Example}\label{zec3}

In this section, our goal is to demonstrate the efficiency and robustness of the proposed strategy, denoted by (Algo3), in comparison with the Double Inertial Steps Into the Single Projection Method with Non-Monotonic Step Sizes for Solving Pseudomonotone Variational Inequalities \cite{thong2024using}, the Two Subgradient Extragradient Methods Based on the Golden Ratio Technique for Solving Variational Inequality Problems \cite{oyewole2024two}, and the Versions of the Subgradient Extragradient Method for Pseudomonotone Variational Inequalities \cite{khanh2020versions}, respectively denoted by (Algo1), (Algo2), and (Algo4). We used Matlab 2021a on a Dell Core i7 computer to conduct the numerical simulation. 

\begin{example}\label{EEe1}
  We consider the nonlinear VIP proposed by Sun in \cite[Example 5]{sun1994projection}, which reads:
  \[ \text{Find}\quad x^{*}\in C \quad\text{such that}\quad
    \langle \Phi(x^*), x - x^* \rangle \geq 0, \quad \forall x \in C,
  \]
  where
  \[
    \Phi(x) = B_1(x) + B_2(x),
  \]
  \[
    B_1(x) = (b_1(x), b_2(x), \dots, b_d(x)),
  \]
  \[
    B_2(x) = Dx + c,
  \]
  \[
    b_i(x) = x_i^2 + x_i + x_{i-1}x_i + x_ix_{i+1}, \quad i = 1, 2, \dots, d,
  \]
  with boundary conditions \( x_0 = x_{d+1} = 0 \).
  The matrix \( D \) is a square matrix of size \( d \times d \), defined by:
  \[
    d_{ij} = 
    \begin{cases} 
      4, & i = j, \\
      1, & i - j = 1, \\
      -2, & i - j = -1, \\
      0, & \text{otherwise}.
    \end{cases}
  \]
  where \( c = (-1, -1, \dots, -1) \in \mathbb{R}^d \) and the  the feasible set \( C = \mathbb{R}_+^d \).
  We set  the control parameters  for example 1 as follows:
  \begin{enumerate}[label=Algo\arabic*:, leftmargin=*]
    \item $\mu=0.1,\, \lambda_{0}=0.5, \beta=0.8$\, and\, $\delta_{n}=\dfrac{1}{(n^2+1)^2} $
    \item $\mu=0.5, t=1.4, \, \lambda_{0}=0.03, \delta_{n}=\dfrac{2}{(7n+1)}\,$ and \,$\beta_n=1 +\dfrac{1}{n^{0.33}} $
    \item $\mu=0.05,\, \lambda_{0}=0.03, \,\, \rho=0.1,\alpha_{n}=\dfrac{1}{(2n+1)^{0.01}} \, \text{and}\, \,\Gamma_{n}=\dfrac{1}{(n+1)^{1.2}}$
    \item $\mu=0.1 \, \lambda_{0}=0.05,
      \,\beta^{1}_{n}=\dfrac{1}{(n+1)^{0.001}}+1 \, \,\text{and} \quad \beta^{2}_{n}=0.4(1-\beta^{1}_{n}) $.
  \end{enumerate}  
  The initial point \( x_0 \) is uniformly generated and
  we conducted a numerical experiment to examine the performance of our methods and the other three algorithms. The performance are compared in the Figure~\ref{fig1m} below.
  \begin{figure}[H]
    \centering
    \includegraphics[width=0.8\textwidth]{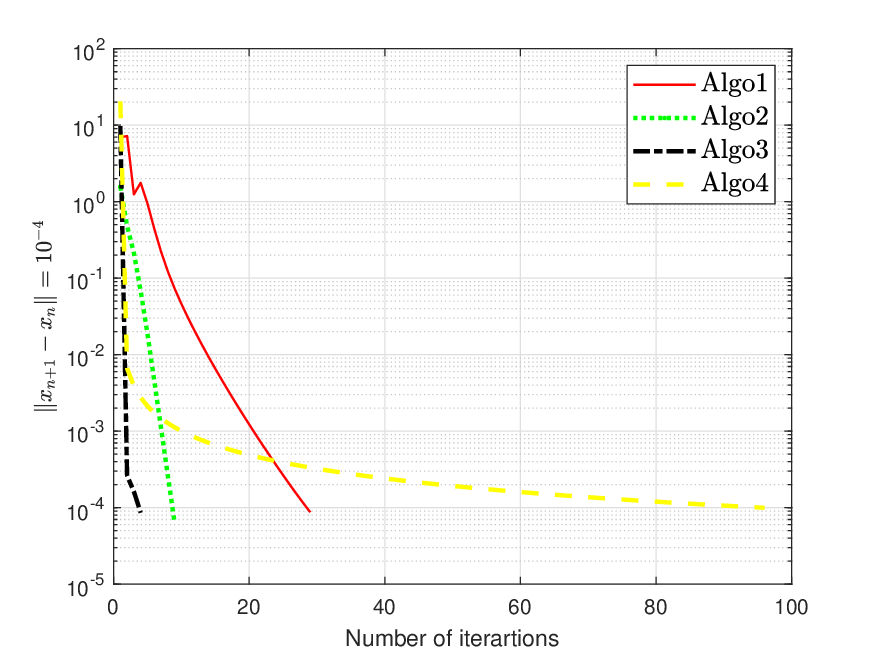}
    \caption{Convergence plots for Example \ref{EEe1} with dimension d=100}
    \label{fig1m}
  \end{figure}\noindent \hspace*{-0.1 cm}
  It is evident from Figure \ref{fig1m}  that the proposed method is effective and requires fewer number of iterations to converge, followed by Algo2. However, Algo1 and Algo4 converge with a greater number of iterations. The results demonstrate the computational efficiency and robustness of the proposed approach. \qed
\end{example}

\begin{example}\label{EE22}
  Consider ${H} = \ell^2$, the real Hilbert space whose elements are square-summable infinite sequences of real numbers, and let $C = \{ x_n \in H: \| x_n\| \leq 5 \}$. The operator $\Phi$ is given by
  \[
    \Phi(x_n) = (7 - \| x_n \|)x_n,\quad \forall x_n \in {H},
  \]
  where $\| x_n \| = \sqrt{\sum_i |x_i|^2}$. It is easy to see that the variational inequality problem (VIP) for $\Phi$ on $C$, denoted by $\text{VIP}(\Phi, C)$, is non-empty. The operator $A$ is pseudomonotone with a Lipschitz constant $L = 11$. The projection onto $C$ is explicitly given by:
  \[P_C(u) = 
    \begin{cases} 
      x_n, & \text{if }\, \| x_n \| \leq 5, \\
      \frac{5x_n}{\| x_n \|}, & \text{otherwise}.
    \end{cases}
  \]
  We set $n=50$ and the control parameters as follows:
  \begin{enumerate}[label=Algo\arabic*:, leftmargin=*]
    \item $\mu=0.05,\, \lambda_{0}=0.03 \,\,\text{and} \,\,\delta_{n}=\frac{1}{(n^2+1)^{0.01}} .$
    \item $\mu=0.5, t=1.4, \, \lambda_{0}=0.03 \,\,\text{and} \,\,\delta_{n}=\frac{2}{(7n+1)} .$
    \item all parameters remain the same except  for $\rho= 0.2$\\ 
    \item $\mu=0.5, \, \lambda_{0}=0.03,
      \,\beta^{1}_{n}=\dfrac{2}{(7n+1)} \,\,\text{and} \,\, \beta^{2}_{n}=0.7(1-\beta^{1}_{n}). $
  \end{enumerate}
  We examined the performance of the proposed with the other three methods, the result is given in the Figure \ref{fig2e} below.
  \begin{figure}[H]
    \centering
    \includegraphics[width=0.8\textwidth]{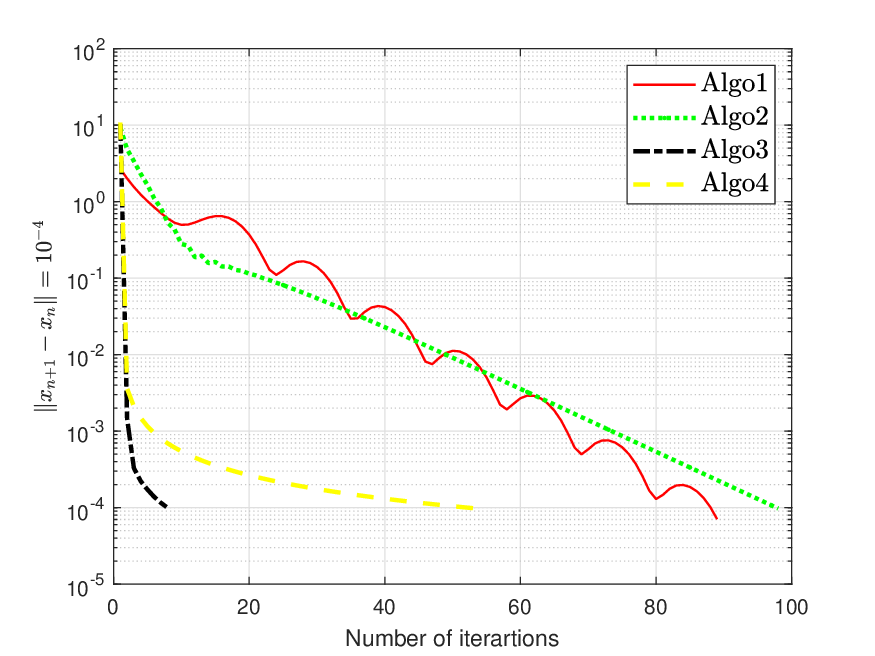}
    \caption{Convergence plots for \ref{EE22} with dimension n=100}
    \label{fig2e}
  \end{figure}
  In this example, we set the tolerance to $10^{-4}$ and present the numerical result in the Figure \ref{fig2e}. The graphical representation clearly shows that the proposed method outperforms all the three methods considered, followed by Algo4, and then the remaining method. \qed
\end{example}

Next, we consider the application of the proposed method in the matrix game problem
\begin{example}[\cite{nemirovski2013mini}] \label{EE33}
  Let us consider a matrix game problem between policeman and the Burglars. There are \( n \) houses in a city, where the \( i \)-th house has wealth \( w^*_i \). Every evening, a Burglar chooses a house \( i \) to attack, and a Policeman chooses to post himself near a house \( j \). After the burglary starts, the Policeman becomes aware of where it happens, and his probability to catch the Burglar is:
  \[
    \exp(-\alpha \cdot \text{dist}(i, j)),
  \]
  where \( \text{dist}(i, j) \) is the distance between houses \( i \) and \( j \), and \( \alpha \) is a constant. The Burglar seeks to maximize his expected profit
  \[
    w^*_i(1 - \exp(-\alpha \cdot \text{dist}(i, j))),
  \]
  while the interest of the Policeman is the opposite. The matrix game optimization problem is given by 
  \[
    \min_{\substack{u }} \max_{\substack{v}} \phi(u, v) := v^T \Phi u,
  \]
  where
  \[
    \Phi_{ij} = w^*_i \left( 1 - \exp(-\alpha \cdot \text{dist}(i, j)) \right)
  \]    
  solves the matrix game 
The control parameters are presented by:
\begin{enumerate}[label=Algo\arabic*:, leftmargin=*]
  \item $\mu=0.1,\, \lambda_{0}=0.03 \,\text{and}\; \;\delta_{n}=\frac{1}{(n^2+1)^{0.01}} $
  \item $\mu=0.5, t=1.4, \, \lambda_{0}=0.03, \, \text{and}\,\;\delta_{n}=\frac{2}{(7n+1)} $
  \item $\mu= 0.07\,\, \text{and}\,\, \rho=0.2$
  \item $\mu=0,5,\, \,\text{and}\,\,\lambda_{0}=0.03,
    \,\beta^{1}_{n}=\frac{2}{(7n+1)} \,\, \text{and} \quad \beta^{2}_{n}=0.7(1-\beta^{1}_{n}). $
\end{enumerate}
\begin{figure}[H]
  \centering
  \includegraphics[width=0.8\textwidth]{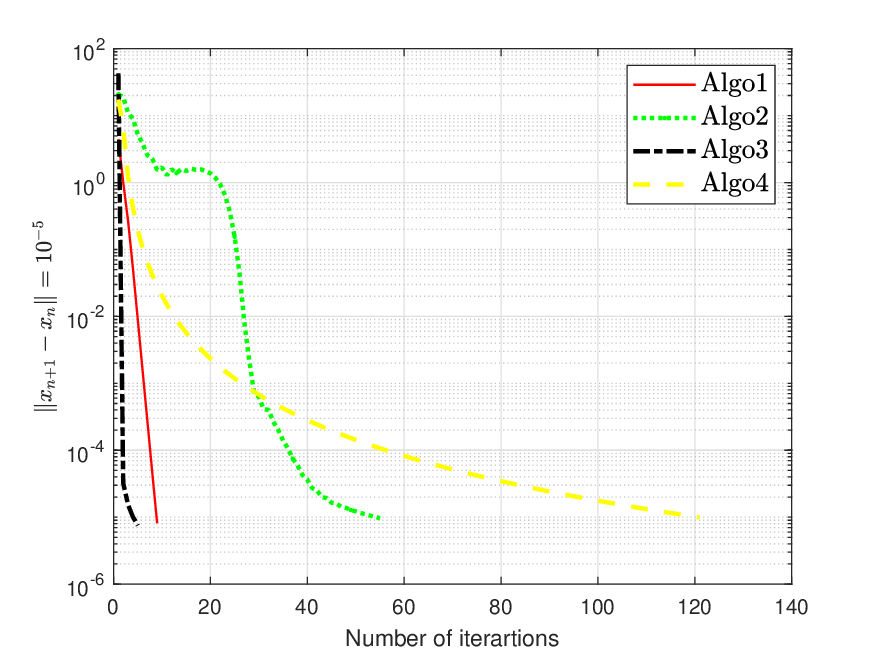}
  \caption{Convergence plots for Example \ref{EE33}  with $w_{j}=50$}
  \label{fig1aa}
\end{figure}
In this example, we set the number of wealthy houses $w_{j}$ to be  50 and $10^{-5}$ to be the stopping criterion. As depicted in Figure \ref{fig1aa}, the the performance of the proposed method is highly encouraging, followed by Algo1, Alglo2 and then Algo3 respectively. \qed
\end{example}

\section{Conclusion}\label{zec4}
The paper presented an adaptive-type subgradient extragradient algorithm that incorporates a conjugate gradient–type direction. The adaptive nature is associated with the steplength, whose sequence is shown to be nonincreasing and bounded away from zero. The selection of the steplength aids in establishing the boundedness of the main iterates generated by the algorithm. Consequently, we establish an inequality that describes the geometric nature of the main iterate relative to the solution elements.  Utilizing these inequalities and the Halpern term in the algorithm, guaranteed the strong convergence of the algorithm, which is the main convergence result. To show the effectiveness of the proposed algorithm, we provided numerical illustrations and also presented an application in which the performance of the algorithm was investigated empirically. The results were obtained using MATLAB and are displayed as figures.

\small
\section*{Acknowledgments}
The first author was supported by Petchra Pra Jom Klao Ph.D. Research Scholarship from King Mongkut’s University of Technology Thonburi, Thailand (Grant No.55/2021).
The authors acknowledge the financial support provided by the Center of Excellence in Theoretical and Computational Science (TaCS-CoE), KMUTT.

\section*{Conflicts of interests}
The authors state no conflicts of interests.

\section*{Author contributions}
\textbf{Conceptualization}, Poom Kumam;
\textbf{Supervision}, Parin Chaipunya;
\textbf{Investigation}, Ibrahim Arzuka;
\textbf{Writing - original draft}, Ibrahim Arzuka and Parin Chaipunya; 
\textbf{Review \& editing}, Poom Kumam.

\renewcommand\bibname{References}


\end{document}